\renewcommand{\blue}{\textcolor{blue}}
\newcommand{\threep}{\mbox{$3'$}}
\newcommand{\ninep}{\mbox{$9'$}}
\newcommand{\ten}{$10$}
  \newcommand\tinyv{\@setfontsize\tinyv{1pt}{1}}
\newcommand{\ip}{\small\mbox{{$i$\!\!} {\tiny $\stackrel{+}{}$} {\small\!\!\!$1$}}}
\newcommand{\im}{\small\mbox{{$i$\!\!\!} {\tiny $\stackrel{-}{}$} {\small\!\!\!$1$}}}
\newcommand{\jp}{\small\mbox{{$j$\!\!\!} {\tiny $\stackrel{+}{}$} {\small\!\!\!$1$}}}
\newcommand{\jm}{\small\mbox{{$j$\!\!\!} {\tiny $\stackrel{-}{}$} {\small\!\!\!$1$}}}
\newcommand{\ycenter}{\Yvcentermath1}
\newcommand{\mread}{\operatorname{read}}
\def\1#1{\operatorname{\bf 1}_k}
\def\F{\mathcal{F}}
\def\jdt{\operatorname{jdt}}
\def\sh{\operatorname{sh}}
\def\ZZ{{\mathbb Z}}
\def\QQ{{\mathbb Q}}
\newtheorem{theorem}{Theorem}[section]
\newtheorem{lemma}[theorem]{Lemma}
\newtheorem{proposition}[theorem]{Proposition}
\newtheorem{definition}[theorem]{Definition}
\theoremstyle{definition}
\newtheorem{example}[theorem]{Example}
\newtheorem{remark}[theorem]{Remark}	% wenn man dem Leser etwas mitteilen moechte
\title[Staircase skew Schur functions are Schur $P$-positive]
{Staircase skew Schur functions \\ are Schur $P$-positive}
\author{Federico Ardila}
\address{Department of Mathematics, San Francisco State University.}
\email{federico@math.sfsu.edu}
\urladdr{http://math.sfsu.edu/federico/}
\author{Luis G. Serrano}
\address{LaCIM, Universit\'e du Qu\'ebec \`a Montr\'eal.}
\email{serrano@lacim.ca}
\urladdr{http://www.thales.math.uqam.ca/~serrano/}
\subjclass[2000]{Primary 05E05; Secondary 05A05, 05E10, 20C25, 20C30}
\date{\today}
\keywords{Schur functions, Schur $P$-functions, shifted tableaux, Eulerian numbers, alternating permutations}
\thanks{
The first author was partially supported by the National Science Foundation CAREER Award
DMS-0956178, the National Science Foundation Grant DMS-0801075, and the SFSU-Colombia Combinatorics Initiative. The second author was supported by a National Science and Engineering Council of Canada  (NSERC) PDF Award. 
}
\begin{document}

\begin{abstract}
We prove Stanley's conjecture that, if $\delta_n$ is the staircase shape, then the skew Schur functions $s_{\delta_n / \mu}$ are non-negative sums of Schur $P$-functions. We prove that the coefficients in this sum count certain fillings of shifted shapes. In particular, for the skew Schur function $s_{\delta_n / \delta_{n-2}}$, we discuss connections with Eulerian numbers and alternating permutations.
\end{abstract}

\maketitle

\section{introduction}

The Schur functions $s_{\lambda}$, indexed by partitions $\lambda$, form a basis for the ring $\Lambda$ of symmetric functions. These are very important objects in algebraic combinatorics. They play a fundamental role in the study of the representations of the symmetric group and the general linear group, and the cohomology ring of the Grassmannian \cite{Ful97}. The Schur $P$-functions $P_{\lambda}$, indexed by strict partitions, form a basis for an important subring $\Gamma$ of $\Lambda$. They are crucial in the study of the projective representations of the symmetric group, and the cohomology ring of the isotropic Grassmannian \cite{HH92} \cite{Joz91}.

The goal of this paper is to prove the following conjecture of Richard Stanley \cite{Stanconj}: If $\delta_n$ is the staircase shape and $\mu \subset \delta_n$, then the \emph{staircase skew Schur function} $s_{\delta_n / \mu}$, which belongs to the ring $\Gamma$, is a nonnegative sum of Schur $P$-functions. We find a combinatorial interpretation for the coefficients in this expansion in terms of Shimozono's compatible fillings \cite{Shi99}. Furthermore, we discuss connections between the special case of the skew Schur function $s_{\delta_n / \delta_{n-2}}$ and alternating permutations, and show an expansion of these in terms of the elementary symmetric functions.
%conjecture a similar expansion for any skew Schur function of the form $s_{\delta_n / \mu}$.

The paper is organized as follows. In Section \ref{sec:prelim} we recall some basic definitions, including Schur and Schur $P$-functions. In Section \ref{sec:staircase} we discuss the staircase Schur functions and prove that they are indeed in the subring $\Gamma$ of $\Lambda$ generated by the Schur $P$-functions. In Section \ref{sec:thm} we state our main result, Theorem \ref{th:main}, which states that the (non-negative integer) coefficients of $P_{\lambda}$ is the number of ``$\delta_n/\mu$-compatible" fillings of the shifted shape $\lambda$.
%describes the (non-negative integer) coefficients of the Schur $P$-expansion of the staircase skew Schur function $s_{\delta_n / \mu}$. 
In Section \ref{sec:jdt} we prove the key proposition that, in the particular case of staircase skew shapes $\delta_n/\mu$, jeu de taquin respects $\delta_n/\mu$--compatibility. Finally in Section \ref{sec:proof} we prove Theorem \ref{th:main}.

The Schur P-positivity of staircase Schur functions has also been proved independently by Elizabeth Dewitt and will appear in her forthcoming thesis \cite{Dew12}

\noindent \textbf{Acknowledgments.}
We would like to thank Richard Stanley for telling us about his conjecture and about Proposition \ref{prop:e}. \cite{Stanconj} 
We also thank Ira Gessel, Peter Hoffman, Tadeusz J\'ozefiak, Bruce Sagan, and John Stembridge for valuable conversations.

\section{Preliminaries}\label{sec:prelim}

A \emph{partition} is a sequence $\lambda = (\lambda_1, \lambda_2, \ldots, \lambda_l) \in \ZZ^l$ with $\lambda_1 \ge \lambda_2 \ge \cdots \ge \lambda_l > 0$. The \emph{Ferrers diagram}, or \emph{shape} of $\lambda$ is an array of square cells in which the $i$-th row has $\lambda_i$ cells, and is left justified with respect to the top row. 
The \emph{size} of~$\lambda$ is~$| \lambda | := \lambda_1 + \lambda_2 + \cdots + \lambda_l$. We denote the number of rows of $\lambda$ by $\ell(\lambda) := l$.

A \emph{strict partition} is a sequence $\lambda = (\lambda_1, \lambda_2, \ldots, \lambda_l) \in \ZZ^l$ such that $\lambda_1 > \lambda_2 > \cdots > \lambda_l > 0$. The \emph{shifted diagram}, or \emph{shifted shape} of $\lambda$ is an array of square cells in which the $i$-th row has $\lambda_i$ cells, and is shifted $i-1$ units to the right with respect to the top row.

For example, the shape~$(5,3,2)$ and the shifted shape~$(5,3,2)$, of size $10$ and length $3$, are shown below.
\[
 \young(~~~~~,~~~,~~) \quad \quad \young(~~~~~,:~~~,::~~)
\]
A \emph{skew (shifted) diagram} (or shape) $\lambda / \mu$ is obtained by removing a (shifted) shape $\mu$ from a larger shape $\lambda$ containing $\mu$.

A \emph{semistandard Young tableau} or \emph{SSYT} $T$ of shape $\lambda$ is a filling of a Ferrers shape~$\lambda$ with letters from the alphabet $X = \{1 < 2 < \cdots \}$ which is weakly increasing along the rows and strictly increasing down the columns.

A \emph{shifted semistandard Young tableau} or \emph{shifted SSYT} $T$ of shape $\lambda$ is a filling of a shifted shape~$\lambda$ with letters from the alphabet $X' = \{1' < 1 < 2' < 2 < \cdots \}$ such that:
\begin{itemize}
\item rows and columns of $T$ are weakly increasing;
\item each $k$ appears at most once in every column;
\item each $k'$ appears at most once in every row;
\item there are no primed entries on the main diagonal.
\end{itemize}
If $T$ is a filling of a shape $\lambda$, we write $\sh(T) := \lambda$. 
The \emph{content} of a (shifted) SSYT $T$ is the vector $(a_1, a_2, \ldots)$, where~$a_i$ is the number of times the letters~$i$ and~$i'$ appear in $T$.

A (shifted) SSYT is standard, if it contains the letters $1,2, \ldots, |\lambda|$, each exactly once. In the shifted case, these letters are all unprimed. If that is the case, we call it a (shifted) SYT.  A \emph{skew (shifted) Young tableau} is defined analogously.

\begin{example}
The following are examples of a SSYT and a shifted SSYT, both having shape $\lambda = (5,3,2)$ and content $(2,1,1,2,2,1,0,0,1)$.
\[ \ycenter
 \young(11235,445,69) \quad  \quad \young(112{\threep}5,:445,::6{\ninep})
\]
\end{example}

In a SYT or a shifted SYT $T$, the pair of entries $(i,j)$, where $i < j$, forms an \emph{ascent} if $j$ is located weakly north and strictly east of $i$. We abbreviate and say that $j$ is \emph{northEast} of $i$. The pair $(i,j)$ forms a \emph{descent} if $j$ is located strictly south and weakly west, or \emph{Southwest}, of $i$. Note that $(i,j)$ could be neither an ascent nor a descent.

When $j=i+1$, the pair $(i,i+1)$ must be either an ascent or a descent, and we abbreviate and call $i$ an ascent or a descent as appropriate.  An entry $i$ forms a \emph{peak} if $i-1$ is an ascent and $i$ is a descent. 

%
%In a SYT or a shifted SYT $T$, the pair of entries $(i,j)$, where $i < j$, forms a \emph{descent} (resp. \emph{ascent}) if $j$ is located strictly south (resp. weakly north) and weakly west (resp. strictly east) of $i$. If $j=i+1$, one abbreviates and calls $i$ as a descent (resp. ascent). Note that each $i$ is either a descent or an ascent.\footnote{This is not necessarily the case for pairs $(i,j)$ with $|i - j| >1$.} An entry $i$ forms a \emph{peak} if $i-1$ is an ascent and $i$ is a descent. 
%\textsf{\textbf{ojo: corregi la definicion de ascent y descent. esta bien, cierto? otra cosa: es estandar llamar a estos "descents"? yo les hubiera puesto "inversion", pero me imagino que ya es demasiado tarde para eso. Respuesta: Si, en tableaux si es estandar llamar a esos descents y ascents (Stanley EC2, pagina 361). Ya la vaina cambia cuando es para permutaciones, porque ahi si esa definicion puede ser inverse descents. Pillese lo que le cambie en la Definition 6.4 y me dice si le gusta o no. Cuando yo pienso en inversion pienso mas para parejas i,j cualesquiera.}}

\begin{example}\label{ex:ascentdescent}
The figure below shows a SYT of shape $\delta_4:=(4,3,2,1)$ and a shifted SYT of shape $(5,3,2)$, both with descent set $(2,4,5,7,9)$, ascent set $(1,3,6,8)$, and peak set $(2,4,7,9)$.
 \[
  \ycenter \young(1247,359,6{\ten},8) \quad  \quad \young(12479,:358,::6{\ten})
 \]
\end{example}

For a (shifted) Young tableau $T$ with content $(a_1, a_2, \ldots)$, we let %denote the corresponding monomial by
$x^T = x_1^{a_1} x_2^{a_2} \cdots.$
For each partition $\lambda$, the \emph{Schur function} $s_{\lambda}$ is defined as the generating function for semistandard Young tableaux of shape $\lambda$, namely
\[
s_{\lambda} = s_{\lambda} (x_1, x_2, \ldots) := \sum_{\sh (T) = \lambda} x^T.
\]

It is well known (see e.g., \cite{Sta99}) that the \emph{power sum symmetric functions} $p_i = p_i(x_1,x_2,\ldots) := x_1^i + x_2^i + \cdots$ are a generating set, and the Schur functions $s_{\lambda}$ are a linear basis, for the ring $\Lambda$ of symmetric functions.

For each strict partition $\lambda$, the \emph{Schur $P$-function} $P_{\lambda}$ is defined as the generating function for shifted Young tableaux of shape $\lambda$, namely
\[
P_{\lambda} = P_{\lambda} (x_1, x_2, \ldots) := \sum_{\sh (T) = \lambda} x^T.
\]
%
%The \emph{Schur $Q$-function} is given by
%\[
%Q_{\lambda} = Q_{\lambda} (x_1, x_2, \ldots) = 2^{\ell(\lambda)} P_{\lambda},
%\]
%or equivalently, as the generating function for a different kind of shifted Young tableaux, namely those in which the elements in the main diagonal are allowed to be primed.
The Schur $P$-functions form a basis for the subring $\Gamma$ of $\Lambda$ generated by the odd power sums, $\Gamma := \QQ[p_1, p_3, \ldots]$. This ring also has the presentation
\[
\Gamma = \{f \in \Lambda \, : \, f(t,-t,x_1,x_2,\ldots) = f(x_1,x_2,\ldots)\}.
\]
See, e.g., \cite{HH92}. The \emph{skew Schur functions} $s_{\lambda / \mu}$ and the \emph{skew Schur $P$%- and $Q$
-functions}~$P_{\lambda / \mu}$ %and~$Q_{\lambda / \mu} = 2^{\ell(\lambda) - \ell(\mu)} P_{\lambda / \mu}$ 
are defined similarly for a skew (shifted) shape~$\lambda / \mu$.

\section{The skew Schur functions $s_{{\delta_n}/{\delta_{n-2}}}$ and $s_{{\delta_n}/{\mu}}$.} \label{sec:staircase}

\begin{definition}
The staircase $\delta_n$ is the shape $(n, n - 1,\ldots,2,1)$. Denote $s_{{\delta_n}/{\delta_{n-2}}}=: F_{2n-1}$, and let $\F = \F(x_1, x_2, \ldots) := \sum_{n \geq 1} F_{2n-1}$.
\end{definition}

The symmetric function $F_{2n-1}$ is one of the main subjects of study of this paper. It has nice expansions in terms of the power and elementary symmetric functions.

\begin{definition}
A permutation $a_1a_2\ldots a_n$ of $\{1,\ldots, n\}$ is said to be \emph{alternating} if $a_1<a_2>a_3<a_4>\cdots$.
\end{definition}

\begin{proposition} \label{prop:p} (\cite{Fou76}) Let $E_k$ be the number of alternating permutations of $\{1, \ldots, k\}$, and let $z_{\lambda} := \prod_{i \ge 1} \frac{i^{m_i}}{m_i!}$ for the partition $\lambda = 1^{m_1} 2^{m_2} \cdots$. We have
\[
F_{2n-1} = \sum_{\lambda \in OP(2n-1)} \frac{E_{l(\lambda)}}{z_{\lambda}} p_\lambda
\]
where $OP(2n-1)$ is the set of partitions of $2n-1$ into odd parts.
%\textsf{\textbf{Tiene razon, la notacion en Stanley EC1 es $E_k$. El no tiene notacion para odd partitions, esta bien $OP$? Creo que asi la tienen en Hoffman--Humphreys y otros lados... }}
\end{proposition}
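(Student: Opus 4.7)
Following Foulkes~\cite{Fou76}, I would apply the Frobenius characteristic formula $s_{\lambda/\mu} = \sum_\nu \chi^{\lambda/\mu}_\nu \, p_\nu / z_\nu$ to $\lambda/\mu = \delta_n/\delta_{n-2}$ and compute the skew character $\chi^{\delta_n/\delta_{n-2}}_\nu$ via the Murnaghan--Nakayama rule. The vanishing of the $p_\nu$-coefficient whenever $\nu$ has an even part follows from the fact (established in Section~\ref{sec:staircase}) that $F_{2n-1} \in \Gamma = \QQ[p_1,p_3,p_5,\ldots]$, which immediately localizes the sum on $OP(2n-1)$.

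For $\nu \in OP(2n-1)$, a border strip tableau of $\delta_n/\delta_{n-2}$ of type $\nu$ is a chain $\delta_{n-2} = \mu^{(0)} \subset \cdots \subset \mu^{(\ell)} = \delta_n$ with each $\mu^{(i)}/\mu^{(i-1)}$ a border strip of size $\nu_i$. Since $\delta_n/\delta_{n-2}$ is itself a ribbon of $2n-1$ cells, each such border strip is a contiguous subribbon, so a BST is just an ordered decomposition of the ribbon into $\ell = \ell(\nu)$ contiguous pieces of sizes $\nu_1,\ldots,\nu_\ell$ together with a valid addition order. For an odd-length subribbon of the zigzag the height equals $(\nu_i-1)/2$, depending only on the size, so the MN sign pulls out as a single overall factor. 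What remains is a pure enumeration: the number of valid BSTs of type $\nu$ should equal $E_{\ell(\nu)}$.

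I would exhibit this count through a bijection with alternating permutations of length $\ell$. Label the $\ell$ pieces of the decomposition $1,2,\ldots,\ell$ in their left-to-right order along the ribbon and associate to each BST the permutation $\sigma$ with $\sigma(i)$ equal to the time at which piece $i$ is added. At each interior boundary the two adjacent pieces meet either horizontally (cells in the same row) or vertically (cells in the same column), and in each case the requirement that the intermediate shape be a valid Young diagram sandwiched between $\delta_{n-2}$ and $\delta_n$ dictates which of the two must be added first. Because the zig/zag structure makes these local constraints alternate in direction as we move along the ribbon, stitching them together should turn the global validity condition into precisely the alternating condition on $\sigma$. The main obstacle is handling the two endpieces carefully and verifying that the correspondence is a genuine bijection; an alternative route, should this direct bijection get tangled, is to prove the generating function identity $\sum_{n\ge 1} F_{2n-1} = \tan\bigl(\sum_{k \text{ odd}} p_k/k\bigr)$ and extract the claimed coefficients via the Taylor series of $\tan$.
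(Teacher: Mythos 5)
The paper itself offers no proof of this proposition --- it is quoted from Foulkes --- so there is nothing internal to compare against; your plan is essentially the classical argument, and its skeleton is sound. The localization to $OP(2n-1)$, the observation that $\delta_n/\delta_{n-2}$ is a single ribbon so that border strip tableaux are ordered decompositions into contiguous subribbons, and the bijection with alternating permutations all work. In particular the adjacency between consecutive pieces $i$ and $i+1$ (in left-to-right order along the ribbon) is vertical or horizontal according to the parity of $\nu_1+\cdots+\nu_i$, which is the parity of $i$ when all parts are odd; this forces the addition-time permutation $\sigma$ to satisfy $\sigma(1)>\sigma(2)<\sigma(3)>\cdots$, and since a ribbon sitting over $\delta_{n-2}$ has no covering relations other than between cells adjacent along the ribbon, there are no further constraints. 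So the count is indeed $E_{\ell(\nu)}$ (down--up and up--down permutations being equinumerous), and your worry about the endpieces is unfounded.

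The genuine issue is the sign you propose to ``pull out as a single overall factor.'' A contiguous odd subribbon of size $k$ has height $(k-1)/2$, so the Murnaghan--Nakayama sign of every BST of type $\lambda$ is $(-1)^{(2n-1-\ell(\lambda))/2}$ --- constant over the BSTs of a given type, but \emph{not} equal to $+1$ in general, and it does not cancel against anything. Your argument therefore proves
\[
s_{\delta_n/\delta_{n-2}} \;=\; \sum_{\lambda \in OP(2n-1)} (-1)^{(2n-1-\ell(\lambda))/2}\,\frac{E_{\ell(\lambda)}}{z_\lambda}\,p_\lambda ,
\qquad z_\lambda=\prod_i i^{m_i}m_i!,
\]
which for $n=2$ gives $s_{21}=\tfrac13(p_1^3-p_3)$, the correct value; the unsigned formula in the statement (which also has $z_\lambda$ inverted relative to the standard convention) would give $\tfrac13(p_1^3+p_3)$ and is false as printed. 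So do not try to argue the signs away --- instead note that the statement needs the factor $(-1)^{(2n-1-\ell(\lambda))/2}$. Your fallback route makes this transparent and is the shortest path: the paper's own proof of Proposition \ref{prop:e} gives $\F=\tan(\arctan x_1+\arctan x_2+\cdots)=\tan\bigl(p_1-\tfrac{p_3}{3}+\tfrac{p_5}{5}-\cdots\bigr)$, and expanding $\tan y=\sum_{m\ \mathrm{odd}}E_m y^m/m!$ by the multinomial theorem produces exactly the signed coefficients above.
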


The following proposition expresses the $\F$ in terms of the elementary symmetric functions. Equivalent formulas appear in \cite{Car73}, \cite{Ges77}, \cite[p. 9]{Ges90} and \cite[Corollary 4.2.20]{GJ83}.

\begin{proposition} \label{prop:e} We have
\[
\F = \frac{e_1-e_3+e_5- \cdots}{1-e_2+e_4-\cdots},
\]
where $e_k = \sum_{i_1 < \cdots < i_k} x_{i_1} \cdots x_{i_k}$ is the $k$-th elementary symmetric function.
\end{proposition}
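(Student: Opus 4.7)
The plan is to reduce the proposition to the recurrence
$$\sum_{k=0}^{n-1} (-1)^k e_{2k} F_{2(n-k)-1} = (-1)^{n+1} e_{2n-1} \qquad (\star)$$
(with the convention $e_0 := 1$), proved for every $n \geq 1$. Once $(\star)$ is established, summing over $n \geq 1$ collapses the left-hand side to $\mathcal{F}\cdot(1-e_2+e_4-\cdots)$ and the right-hand side to $e_1-e_3+e_5-\cdots$, giving the proposition.

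To prove $(\star)$, I argue by induction on $n$, starting from the dual Jacobi--Trudi expression $F_{2n-1} = s_{\delta_n/\delta_{n-2}} = \det M_n$, where $M_n$ is an $n\times n$ matrix. Since $\delta_n$ is self-conjugate and $\delta_{n-2}$ has two trailing zeros, direct computation of the entries shows that the first $n-1$ columns of $M_n$ form a two-banded Toeplitz pattern in the even-indexed $e_k$, while the final column $C_n$ is $(e_{2n-1}, e_{2n-3}, \ldots, e_3, e_1)^{T}$, involving only odd-indexed $e_k$.

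The heart of the proof is the following column reduction. I replace $C_n$ by $C_n - \sum_{j=1}^{n-1} a_j C_j$, choosing the coefficients $a_j$ so that every entry below the top of the new column vanishes. Solving this triangular system from the bottom row upward, the inductive hypothesis (that $(\star)$ holds at all smaller values) identifies
$a_{n-k} = (-1)^{k+1} F_{2k-1}$ for each $k=1,\ldots,n-1$. Since column operations preserve the determinant, cofactor expansion of the reduced matrix along the new last column yields
$$F_{2n-1} \;=\; \det M_n \;=\; (-1)^{n+1}\left(e_{2n-1} - \sum_{j=1}^{n-1} a_j\, e_{2j}\right),$$
which upon substituting the values of the $a_j$'s and reindexing is precisely $(\star)$ at $n$.

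The main obstacle is verifying that the $n-1$ scalar equations of the triangular system that defines the $a_j$'s are exactly the instances of $(\star)$ at the smaller values $n' = 1, 2, \ldots, n-1$, so that the inductive hypothesis supplies the explicit form of the column-reduction coefficients. Once this identification is made, the inductive step closes cleanly, and the sum over $n$ yields the proposition.
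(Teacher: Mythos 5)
Your argument is correct, but it takes a genuinely different route from the paper's. You work degree by degree: the identity is equivalent to the recurrence $(\star)$: $\sum_{k=0}^{n-1}(-1)^k e_{2k}F_{2(n-k)-1}=(-1)^{n+1}e_{2n-1}$ for all $n\ge 1$, which you extract from the dual Jacobi--Trudi determinant for $s_{\delta_n/\delta_{n-2}}$. Your description of that matrix is right (since $\delta_n$ is self-conjugate, the $(i,j)$ entry is $e_{2(j-i+1)}$ for $j\le n-1$ and $e_{2n+1-2i}$ in the last column), and I checked the key verification you flag as the main obstacle: the equation coming from row $n-m+1$ of your triangular system is $e_{2m-1}=\sum_{k=1}^{m}a_{n-k}e_{2(m-k)}$, which under the substitution $a_{n-k}=(-1)^{k+1}F_{2k-1}$ and reindexing is exactly $(\star)$ at $n'=m$; so the inductive hypothesis does pin down the coefficients, and the cofactor expansion closes the induction. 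The one step worth making explicit is that the minor left after deleting the first row and last column is upper unitriangular (entries $e_{2(j-i)}$ with $e_0=1$ on the diagonal and $e_{<0}=0$ below), so that expansion really does give $(-1)^{n+1}$ times the top entry. The paper instead argues combinatorially: deleting the leftmost $1$ from a semistandard filling of the ribbon $\delta_n/\delta_{n-2}$ splits it into two smaller ribbons, yielding the functional equation $\F_1-\F_2=x_1+x_1\F_1\F_2$ where $\F_i=\F(x_i,x_{i+1},\ldots)$, equivalently $\arctan\F_1=\arctan x_1+\arctan\F_2$; iterating and taking tangents gives the formula. The paper's route is shorter and exhibits $\F$ as the formal tangent of $\arctan x_1+\arctan x_2+\cdots$, which is what connects these functions to alternating permutations; your route is more mechanical, avoids formal manipulation of $\arctan$ and $\tan$, and isolates the linear recurrence that is the degree-graded content of the identity.
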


\begin{proof}
Consider a SSYT $T$ of shape ${\delta_n}/{\delta_{n-2}}$ with $n \geq 2$ which contains a $1$. Let the leftmost $1$ occur in the (top entry on the) $k$th column. When we remove that $1$, we are left with a SSYT of  shape ${\delta_k}/{\delta_{k-2}}$ containing no $1$s and a SSYT of shape ${\delta_{n-k}}/{\delta_{n-k-2}}$. It follows that
\[
\F(x_1, x_2, \ldots) - \F(x_2, x_3, \ldots) = x_1 + \F(x_2, x_3, \ldots) x_1\F(x_1, x_2, \ldots).
\]
Denoting $\F_i:= \F(x_i, x_{i+1}, \ldots)$, we rewrite this as $\F_1 = \frac{x_1+\F_2}{1-x_1\F_2}$, which gives that $\arctan \F_1 = \arctan x_1 + \arctan \F_2$ as formal power series. Iterating, we obtain  
\[
\arctan \F_1 = \arctan x_1 + \arctan x_2 + \cdots,
\]
from which the desired formula follows by taking the tangent of both sides.
\end{proof}

More importantly for us, we observe that $\F$ is in the subring $\Gamma$ of $\Lambda$.

\begin{lemma} \cite{Stanconj}
The skew Schur functions $F_{2n-1}$ and, more generally, the \emph{staircase skew Schur functions} $s_{{\delta_n}/{\mu}}$, are in the subring $\Gamma$ of $\Lambda$.
\end{lemma}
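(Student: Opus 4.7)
The plan is to verify directly the characterization $s_{\delta_n/\mu}(t,-t,x_1,x_2,\ldots) = s_{\delta_n/\mu}(x_1,x_2,\ldots)$ of membership in $\Gamma$ stated in the preliminaries, using the dual Jacobi--Trudi formula and the self-conjugacy $\delta_n' = \delta_n$.

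Specifically, dual Jacobi--Trudi expresses
\[
s_{\delta_n/\mu} \;=\; \det\bigl(e_{a_{ij}}\bigr)_{1 \le i,j \le n}, \qquad a_{ij} \;:=\; (\delta_n)_i - \mu'_j - i + j \;=\; n + 1 - 2i + j - \mu'_j,
\]
where $\mu'$ is the conjugate partition. The generating-function factorization $\prod_k(1+x_kz) = (1-t^2z^2)\prod_k(1+y_kz)$ under the substitution $x_1 = t$, $x_2 = -t$, $x_{k+2} = y_k$ produces the bilinear identity $e_k(t,-t,y) = e_k(y) - t^2\, e_{k-2}(y)$. Plugging this into each entry and expanding the determinant by multilinearity in the rows yields
\[
s_{\delta_n/\mu}(t,-t,y) \;=\; \sum_{S \subseteq [n]} (-t^2)^{|S|}\,\det\bigl(M_S(y)\bigr),
\]
where $M_S$ has $(i,j)$-entry $e_{a_{ij}-2}(y)$ in the rows $i \in S$ and $e_{a_{ij}}(y)$ in the rows $i \notin S$.

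The heart of the argument is to show $\det(M_S) = 0$ for every nonempty $S$, which I would handle in two cases. If $n \in S$, then every subscript in row $n$ is $a_{nj} - 2 = j - \mu'_j - n - 1 \le -1$ (using $j \le n$ and $\mu'_j \ge 0$), so that row is identically zero. If instead $n \notin S$, let $i = \max S$; a one-line computation shows $a_{ij} - 2 = a_{i+1,j}$ for every $j$, since the staircase relation $(\delta_n)_i - (\delta_n)_{i+1} = 1$ is exactly what moves each subscript down by $2$ upon advancing a row. Thus row $i$ of $M_S$ (shifted, as $i \in S$) duplicates row $i+1$ (unshifted, as $i+1 \notin S$), and the determinant vanishes.

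Only the term $S = \emptyset$ then survives, giving $s_{\delta_n/\mu}(t,-t,y) = \det\bigl(e_{a_{ij}}(y)\bigr) = s_{\delta_n/\mu}(y)$, which is the desired identity and places $s_{\delta_n/\mu}$ (and in particular $F_{2n-1} = s_{\delta_n/\delta_{n-2}}$) in $\Gamma$. I expect the only real obstacle to be the row-duplication check; everything else is routine bookkeeping via Jacobi--Trudi and generating functions. The staircase hypothesis is used essentially in three places: self-conjugacy makes the dual Jacobi--Trudi matrix square of size exactly $n$, the shift relation $(\delta_n)_i - (\delta_n)_{i+1} = 1$ is what makes the row-duplication valid, and $(\delta_n)_n = 1$ is what pushes the subscripts of the last row into the negative range.
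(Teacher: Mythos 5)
Your proof is correct, and for the general case it takes a genuinely different route from the paper. The paper splits the lemma in two: for $F_{2n-1}$ it applies the identity $e_k(t,-t,x_1,x_2,\ldots)=e_k-t^2e_{k-2}$ to the closed form $\F=(e_1-e_3+\cdots)/(1-e_2+e_4-\cdots)$ of Proposition \ref{prop:e}, and for general $\mu$ it invokes the Murnaghan--Nakayama rule, arguing that every border strip in $\delta_n/\mu$ has odd size, so the coefficient of $p_\alpha$ vanishes whenever $\alpha$ has an even part and hence $s_{\delta_n/\mu}\in\QQ[p_1,p_3,\ldots]$. You instead verify the other presentation of $\Gamma$ (the invariance under $f(t,-t,x_1,x_2,\ldots)=f(x_1,x_2,\ldots)$, which the paper states with a reference to \cite{HH92}) directly on the dual Jacobi--Trudi determinant; your two cancellation cases check out --- if $n\in S$ the subscripts $a_{nj}-2=j-\mu'_j-n-1$ are all negative, and if $i=\max S<n$ then $a_{ij}-2=a_{i+1,j}$ because consecutive columns of the staircase differ by one, so rows $i$ and $i+1$ of $M_S$ coincide. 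Your argument is uniform in $\mu$ and purely computational, needing neither the generating-function identity for $\F$ nor the border-strip parity observation; what it gives up is the extra structural information in the paper's route, namely that the power-sum expansion of $s_{\delta_n/\mu}$ is supported on partitions into odd parts, which exhibits membership in $\Gamma=\QQ[p_1,p_3,\ldots]$ under its defining presentation rather than via the equivalence of the two descriptions of $\Gamma$.
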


\begin{proof} From the equation $e_k(t, -t, x_1, x_2, \ldots) = e_k - t^2e_{k-2}$ and Proposition \ref{prop:e}  it follows that $\F(t, -t, x_1, x_2, \ldots)= \F(x_1, x_2, \ldots)$, which proves that $F_{2n-1} \in \Gamma$.

For the general case, one can mimic the proof of \cite[Prop. 7.17.7]{Sta99} for the particular case of $\mu = \emptyset$. Namely, by the Murnaghan--Nakayama rule \cite[Theorem 7.17.3]{Sta99}, the coefficient of $p_{\alpha}$ in $s_{\delta_n / \mu}$ is equal to $\sum_{T} (-1)^{ht(T)}$ where $T$ runs over all border strip tableaux of shape $\delta_n / \mu$ and type $\alpha$, and $ht(T)$ is the height of $T$. If $\alpha$ has any even part, reorder the parts such that this even part is the last nonzero entry. But then, one can see that there is no border strip tableau of shape $\delta_n / \mu$ and content $\alpha$, since any border strip of shape $\delta_n / \mu$ must have odd size. Thus, the coefficient of $p_{\alpha}$ is zero. We conclude that $s_{\delta_n / \mu} \in \Gamma$.
% \textsf{\textbf{la prueba de Stanley es literalmente la misma? si no, siendo que es tan facil, por que no la escribe a toda?
%me dijo que hay una explicacion facil con rim hooks, no? Respuesta: Si, puse la referencia de la prueba de Stanley porque es igualita, pero si quiere la escribimos, son por ahi 3 lineas usando Murnaghan--Nakayama.}}
\end{proof}

From the previous lemma, it follows that  $F_{2n-1}$ and, more generally, $s_{{\delta_n}/{\mu}}$ have expansions in terms of the Schur $P$-functions. The purpose of this paper is to clarify this expansion.

\section{Main result}\label{sec:thm}

%\begin{definition}
% The staircase $\delta_n$ is the shape $(n,n-1,\ldots,2,1)$.
%\end{definition}

\begin{definition}
 A (shifted) standard Young tableau is \emph{alternating} if every odd number is an ascent and every even number is a descent. %When the context is clear, we will simply refer to either kind of tableaux as alternating.
\end{definition}

\begin{example}\label{ex:alternating}
 The following are the only two shifted standard Young tableaux of size $7$ which are alternating:
\[
 \ycenter \young(1246,:357) \quad , \quad \young(1246,:35,::7).
\]
\end{example}

The following is a special case of our main result, Theorem \ref{th:main}.

\begin{theorem}\label{th:main1}
The skew Schur function $s_{{\delta_n}/{\delta_{n-2}}}$ can be expressed as a nonnegative sum of Schur $P$-functions. We have
%\[
%s_{{\delta_n}/{\delta_{n-2}}} = \sum_{\lambda \in SP(2n-1)} \textrm{alt}(\lambda) P_\lambda
%\]
%where the sum is over the strict partitions of $2n-1$, and  $\textrm{alt}(\lambda)$ is the number of shifted alternating tableaux of shape $\lambda$. 
% Moreover, for each strict partition $\lambda$ of  size $2n-1$, the coefficient of $P_{\lambda}$ in this sum is equal to the number of shifted alternating tableaux of shape $\lambda$.
\[
 s_{\delta_n / \mu} = \sum_{U \in \textsf{AltShSYT}(2n-1)} P_{\sh(U)},
\]
where \textsf{AltShSYT}$(2n-1)$ is the set of shifted SYT of size $2n-1$ which are alternating.
\end{theorem}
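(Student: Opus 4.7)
The plan is to obtain Theorem~\ref{th:main1} as a specialization of the general Theorem~\ref{th:main} to the case $\mu = \delta_{n-2}$, and then to show that in this specific case Shimozono's notion of $\delta_n/\mu$-compatibility (\cite{Shi99}) reduces exactly to the alternating condition on shifted SYT.

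First I would make the geometry of $\delta_n/\delta_{n-2}$ explicit. For $1 \le i \le n-2$ the skew shape contributes the two rightmost cells of row~$i$ of $\delta_n$ (at columns $n-i$ and $n-i+1$); row $n-1$ contributes the cells at $(n-1,1)$ and $(n-1,2)$; and row $n$ contributes the single cell $(n,1)$. The resulting $2n-1$ cells form a connected zigzag ribbon whose successive adjacencies alternate perfectly between horizontal (two cells of the same row) and vertical (the shared column between two consecutive rows). This alternation is the combinatorial seed responsible for the word ``alternating'' in the statement.

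Next, Theorem~\ref{th:main} specialized to $\mu = \delta_{n-2}$ gives an expansion
\[
s_{\delta_n/\delta_{n-2}} \;=\; \sum_\lambda c_\lambda \, P_\lambda,
\]
where $c_\lambda$ counts the $\delta_n/\delta_{n-2}$-compatible shifted SYT of shape $\lambda$. I would then unpack what this compatibility condition says for the zigzag ribbon. Reading the ribbon from its northEast endpoint to its Southwest endpoint, the horizontal steps occur at positions $1\text{-}2,\,3\text{-}4,\ldots,(2n-3)\text{-}(2n-2)$ and the vertical steps at positions $2\text{-}3,\,4\text{-}5,\ldots,(2n-2)\text{-}(2n-1)$. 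The local compatibility condition at a horizontal step forces the corresponding consecutive pair $(2k-1,2k)$ to be an ascent in the compatible shifted SYT, while at a vertical step it forces each pair $(2k,2k+1)$ to be a descent. Together these are precisely the requirement that every odd index be an ascent and every even index be a descent, i.e.\ the alternating condition. Thus $c_\lambda$ equals the number of alternating shifted SYT of shape $\lambda$, and the theorem follows.

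The main obstacle does not lie in this specialization, which amounts to matching the ribbon geometry against the local pieces of Shimozono's compatibility definition. The genuine work sits in Theorem~\ref{th:main} itself, whose proof (Sections~\ref{sec:jdt} and~\ref{sec:proof}) depends on the nontrivial fact that shifted jeu de taquin preserves $\delta_n/\mu$-compatibility. Once that is established, reducing to the staircase ribbon $\delta_n/\delta_{n-2}$ is a straightforward bookkeeping step.
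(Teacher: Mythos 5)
This is exactly the paper's own route: Theorem~\ref{th:main1} is obtained as the special case $\mu=\delta_{n-2}$ of Theorem~\ref{th:main}, combined with the observation (the paper's Remark~\ref{re:alternating}) that $\delta_n/\delta_{n-2}$--compatibility coincides with the alternating condition, so your proposal is correct and essentially identical in structure. One bookkeeping caveat: with the paper's conventions a horizontal domino $\blue{\ycenter\young(i{\ip})}$ in $T_{\delta_n/\mu}$ forces a \emph{descent} and a vertical domino forces an \emph{ascent}, and since the standard filling is read from the bottom row upward the horizontal adjacencies carry the pairs $(2k,2k+1)$ and the vertical ones the pairs $(2k-1,2k)$ --- your write-up swaps both of these, but the two slips cancel and your final characterization (odd $=$ ascent, even $=$ descent) is the correct one.
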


\begin{example}
From Example \ref{ex:alternating} it follows that
\[
s_{\delta_4 / \delta_2} = P_{43} + P_{421}. 
\] 
Similarly, 
\[
s_{\delta_5 / \delta_3} = P_{54} + 2P_{531}+P_{432} 
\] 
because the shifted SYT of size $9$ which are alternating are:
\[
 \ycenter 
 \young(12468,:3579)\, ,  \quad
 \young(12468,:357,::9)\, , \quad
 \young(12468,:359,::7) \,, \quad
 \young(1246,:358,::79) \,.
\]

\end{example}

\begin{definition}
 For a skew shape $\lambda / \mu$ of size $n$, the \emph{standard filling} $T_{\lambda / \mu}$ is given by filling the shape with the entries $1,2,\ldots,n$, starting from the bottom row and moving up, subsequently filling each row from left to right. To distinguish it from the SYTs, we color it \blue{blue}.\footnote{If you printed this paper in black and white, you are not missing much.}
\end{definition}

\begin{example}
 For the shape $54321 / 32$, we have
\[
\ycenter T_{54321 / 32} = \blue{\young(:::9{\ten},::78,456,23,1)}.
\]
\end{example}

\begin{definition} \cite{Shi99}
 A (shifted or unshifted) SYT $U$ of size $|\lambda| - |\mu|$ is said to be \emph{$\lambda / \mu$--compatible} if
\begin{itemize}
% \item whenever $i$ is directly to the left of $i+1$ in $T_{\lambda / \mu}$, $i$ is a descent in $U$, and
% \item whenever $i$ is directly below $j$ on $T_{\lambda / \mu}$, $(i,j)$ is an ascent in $U$.
\item whenever $T_{\delta_n/\mu}$ contains \blue{$\ycenter \young(i{\ip})$}, $i$ is a descent in $T$.
 \item whenever $T_{\delta_n/\mu}$ contains \blue{$\ycenter \young(j,i)$}, $(i,j)$ is an ascent in $T$.
\end{itemize}
\end{definition}

\begin{remark}\label{re:alternating}
Note that a (shifted) standard Young tableau is alternating if and only if it is $\delta_n / \delta_{n-2}$--compatible. 
\end{remark}

\begin{example}\label{ex:compatible}
 The following are the only two $54321 / 32$--compatible shifted standard Young tableaux:
\[
\ycenter \young(12479,:358{\ten},::6) \quad \text{and} \quad \young(12479,:358,::6{\ten})
\]
\end{example}

The following is our main result.

\begin{theorem}\label{th:main}
For any shape $\mu \subset \delta_n$, the skew Schur function $s_{\delta_n / \mu}$ can be expressed as a nonnegative linear combination of Schur $P$-functions. We have
\[
 s_{\delta_n / \mu} = \sum_{U \in \textsf{CompShSYT}(\delta_n / \mu)} P_{\sh(U)},
\]
where \textsf{CompShSYT}$(\delta_n / \mu)$ is the set of shifted SYT tableau which are $\delta_n / \mu$--compatible.
%
%\[
%s_{{\delta_n}/\mu} = \sum_{\lambda \in SP(2n-1)} \textrm{comp}_{\delta_n/\mu}(\lambda) P_\lambda
%\]
%where $\textrm{comp}_{\delta_n/\mu}(\lambda)$
%is the number of shifted standard Young tableaux of shape $\lambda$ which are $\delta_n / \mu$--compatible.
\end{theorem}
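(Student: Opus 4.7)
The plan is to prove Theorem \ref{th:main} by exhibiting a content-preserving bijection between SSYTs $T$ of shape $\delta_n/\mu$ and pairs $(V, U)$, where $V$ is a shifted SSYT of some strict shape $\lambda$ and $U$ is a $\delta_n/\mu$-compatible shifted SYT of the same shape $\lambda$. Since $s_{\delta_n/\mu} = \sum_T x^T$ and $\sum_U P_{\sh(U)} = \sum_U \sum_V x^V$, such a bijection proves the identity monomial by monomial. By the classical standardization principle (an SSYT corresponds to an SYT together with a content vector compatible with its descent set, and similarly for shifted tableaux), it suffices to match SYTs $T^*$ of shape $\delta_n/\mu$ bijectively with pairs $(V^*, U)$ of shifted SYTs of equal shape, where $U$ is $\delta_n/\mu$-compatible and the descent data are preserved in the way required for destandardization to agree on both sides.

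The bijection I would construct uses iterated shifted jeu de taquin. Starting from $T^*$, I would slide entries inward through the cells originally occupied by $\mu$, one inner corner at a time, until the filling occupies a straight shifted shape $\lambda$; the terminal filling is $V^*$, and the positions and order in which the cells of $\mu$ are vacated give rise to a standard shifted tableau $U$ of shape $\lambda$, obtained by labeling those cells $1, 2, \ldots$ in vacating order. The key proposition of Section \ref{sec:jdt}, which I take as given, asserts that each shifted jdt slide preserves $\delta_n/\mu$-compatibility of the recording; since the empty initial recording is trivially compatible, compatibility propagates through the entire rectification and the terminal $U$ is $\delta_n/\mu$-compatible. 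Invertibility follows by reversing the slides: given $(V^*, U)$, performing reverse shifted jdt slides dictated by $U$ (from largest to smallest entry) reconstructs $T^*$ uniquely.

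The main obstacle is the descent bookkeeping needed to ensure that standardization and destandardization commute with the shifted jdt procedure. Concretely, one must show that the descent set of $T^*$ determines, together with the descent and peak data of $V^*$ and the compatibility constraints on $U$, exactly which content vectors yield valid SSYTs on one side and valid shifted SSYTs on the other. Because shifted SSYTs use the primed alphabet $\{1' < 1 < 2' < 2 < \cdots\}$ and are constrained so that no primed entries lie on the diagonal, each $k$ appears at most once per column, and each $k'$ at most once per row, the admissibility conditions on the shifted side are subtler than on the ordinary side; aligning these through the compatibility of $U$ and the ascent/descent structure of $V^*$ is the technical heart of the argument, and is where I expect Section \ref{sec:jdt}'s compatibility-preservation lemma to do its most delicate work.
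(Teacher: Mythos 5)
Your proposal diverges substantially from the paper's proof, and it contains two concrete gaps that I do not think can be repaired in the form you describe. First, the recording tableau $U$ is not well-defined as a shifted SYT of shape $\lambda$: when you rectify a skew tableau of shape $\delta_n/\mu$ down to a straight shifted shape $\lambda$, you perform $|\mu|$ slides and each slide vacates exactly one outer corner, so any ``vacating-order'' recording is a tableau with $|\mu|$ cells, not one of shape $\lambda$. Relatedly, Proposition \ref{prop:compatible} is a statement about the tableau being rectified --- it says $T$ is $\delta_n/\mu$--compatible if and only if $\jdt(T)$ is, where compatibility is read off from the ascent/descent pattern of the entries of $T$ itself --- not about an incrementally built recording tableau, so ``the empty initial recording is trivially compatible and compatibility propagates'' is not an application of that proposition. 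In the paper, the proposition is applied to the straight-shape \emph{unshifted} SYTs $T$ that index the Littlewood--Richardson coefficients in Shimozono's expansion $s_{\delta_n/\mu}=\sum_\nu c^{\delta_n}_{\mu,\nu}s_\nu$; these $T$ are rectified to shifted SYTs $U=\jdt(T)$, and Stembridge's theorem $P_{\sh(U)}=\sum_{T\,:\,\jdt(T)=U}s_{\sh(T)}$ then collapses the double sum. Your proposal uses neither of these two black-box theorems and tries to replace both with a single bijection.

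Second, and more decisively, the standardization reduction you propose is numerically impossible. The coefficient of $x_1x_2\cdots x_N$ in $P_\lambda$ (with $N=|\lambda|$) is $2^{N-\ell(\lambda)}g^\lambda$, where $g^\lambda$ denotes the number of shifted SYT of shape $\lambda$, because in a standard-content shifted SSYT every non-diagonal entry may independently be primed or unprimed. Hence the theorem itself forces $f^{\delta_n/\mu}=\sum_U 2^{N-\ell(\sh(U))}g^{\sh(U)}$, where $f^{\delta_n/\mu}$ is the number of SYT of shape $\delta_n/\mu$, whereas a bijection between such SYTs $T^*$ and pairs $(V^*,U)$ of shifted SYTs of equal shape would give $f^{\delta_n/\mu}=\sum_U g^{\sh(U)}$. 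The missing powers of $2$ are exactly the reason Schur $P$-functions destandardize via peak sets and enriched $P$-partitions rather than descent sets; this is not bookkeeping to be deferred but the point at which the naive bijective strategy fails. A genuinely bijective or quasisymmetric proof would have to match the fundamental quasisymmetric expansion of $s_{\delta_n/\mu}$ over descent sets against the peak quasisymmetric expansion of the $P_{\sh(U)}$, which is essentially what invoking Stembridge's theorem accomplishes in the paper.
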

%\textsf{\textbf{Ojo, aca definimos ShComp, y tambien en la prueba del teorema, en la ultima pagina. Pero al final toca, porque tambien toca definir Comp. No se me ocurre como arreglarlo (o dejarlo asi?)... que dice?}}

\begin{example}
In light of Example \ref{ex:compatible}, Theorem \ref{th:main} says that
 \[
  s_{54321 / 32} = P_{541} + P_{532}.
 \]
\end{example}

Note that Theorem \ref{th:main1} is a special case of Theorem \ref{th:main}, by Remark \ref{re:alternating}.
%\textsf{\textbf{No es que me encante poner remarks, pero queria especificar por que el resultado se sigue... le parece? }}

\section{Jeu de taquin and $\delta_n/\mu$--compatibility}\label{sec:jdt}

\begin{definition}
 Let $T$ be a SYT. Consider $T$ as a skew shifted SYT in the shifted plane. Denote by $\jdt(T)$ its \emph{shifted jeu de taquin rectification}, inspired by the notation and terminology in \cite[A1.2]{Sta99}.
\end{definition}

\begin{example}\label{ex:jdt} 
 \[
  \ycenter \jdt \left( \young(1247,359,6{\ten},8) \right) = \young(12479,:358,::6{\ten}).
 \]
Recall that in each step or \emph{slide} or jeu de taquin, we choose an empty internal corner, move the smaller of its (one or two) neighbors into this empty cell, then fill the resulting cell in the same way, and continue until we reach an external corner, and obtain a skew SYT. We do this subsequently until we obtain a shifted SYT, which turns out to be independent of the choices made \cite[A1.2]{Sta99}. For instance, we can compute the jeu de taquin rectification above as follows:
 \[
  \ycenter 
  \young(\,\,\,1247,:\,\,359,::\,6{\ten},:::8) \mapsto
  \young(\,\,\,1247,:\,\,359,::68{\ten}) \mapsto
  \young(\,\,\,1247,:\,359,::68{\ten}) \mapsto
  \young(\,\,1247,:\,359,::68{\ten}) 
 \]
 \[ 
 \ycenter
\mapsto  \young(\,\,1247,:3589,::6{\ten}) 
\mapsto  \young(\,1247,:3589,::6{\ten})
\mapsto  \young(12479,:358,::6{\ten})\,.
 \]
\end{example}

Our crucial technical lemma says that $\delta_n / \mu$--compatibility is well behaved  under jeu de taquin. This is not true for $\lambda / \mu$--compatibility in general: for
\[
\ycenter \jdt \left(\young(1,2)\right) = \young(12)\, ,
\]
jeu de taquin makes the tableau lose its $(2)/\emptyset$--compatibility and gain $(1,1)/\emptyset$--compatibility.

We first give a short argument for the special case of $\delta_n/\delta_{n-2}$, and then a different (and necessarily more intricate) argument for the general case.

%
%
%
%\begin{definition}
% The \emph{reading word} $\mread(T)$ of a (possibly skew) SSYT $T$ is obtained by reading every row from left to right, taking the columns from bottom to top. 
% \end{definition}
% 
%\begin{definition}
%Say a number $i$ is a \emph{right ascent} of a permutation $\pi $ if $i$ precedes $i+1$ in $\pi$, and an \emph{right descent} otherwise. Note that $i$ precisely corresponds to an ascent or descent in the permutation $\pi^{-1}$. In this article, for simplicity, we will refer to them simply as \emph{ascent} and \emph{descent}. Say $i$ is a \emph{peak} if $i-1$ is an ascent and $i$ is a descent.
%\end{definition}
%
%The following statement is clear from the definitions.
%\begin{lemma} \label{lemma:reading}
%A number $i$ is an ascent (descent/peak) in a standard Young tableau $T$ if and only if it is an ascent (descent/peak) in the reading word $\mread(T)$.
%\end{lemma}
%

\begin{proposition}\label{prop:compatible}
A standard Young tableau $T$ is alternating if and only if $\jdt(T)$ is alternating.
\end{proposition}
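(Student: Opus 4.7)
The plan is to show that each individual shifted jdt slide preserves the alternating property. Since $\jdt(T)$ is obtained by iterating such slides, and since each slide is invertible via a reverse slide, this will immediately yield both directions of the biconditional.

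Recall that $T$ is alternating if and only if, for every $i \in \{1,\ldots,2n-2\}$, the pair $(i,i+1)$ is an ascent when $i$ is odd and a descent when $i$ is even; equivalently, the peak set of $T$ is exactly $\{2,4,\ldots,2n-2\}$. So it suffices to show that a single slide preserves the ascent-versus-descent status of each consecutive pair $(i,i+1)$. A single slide moves only the entries lying on its slide path, and each such entry moves by exactly one cell (one up or one to the left). I will organize the argument into three cases according to how many of $\{i,i+1\}$ lie on the slide path.

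If neither lies on the path, the positions of $i$ and $i+1$ do not change and their status is preserved trivially. If exactly one lies on the path, the moving entry shifts by one cell in a prescribed direction; the slide rule (always move the smaller of the two candidate neighbors into the empty cell), combined with the fact that $T$ is alternating, forces this shift to be compatible with the ascent/descent relation to the stationary entry. If both lie on the path, I will use the slide rule to argue that $i$ and $i+1$ must occupy one of a short list of specific local configurations along the path, and in each case verify directly that their coordinated motion preserves the Southwest/NorthEast relation.

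The main obstacle is this last case, where both $i$ and $i+1$ lie on the slide path, together with the additional care needed near the main diagonal of the shifted plane, where the slide path can interact with diagonal entries in a slightly nonstandard manner. I expect that the alternating hypothesis on $T$, combined with a careful (but finite) enumeration of local configurations, reduces each subcase to a direct inspection. Once the slide-preservation of the alternating property is established, iterating yields the proposition.
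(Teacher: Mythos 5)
Your overall strategy --- show that each individual slide preserves the alternating property, then iterate --- is sound, and it is essentially the strategy the paper uses for the harder, general statement that jeu de taquin preserves $\delta_n/\mu$--compatibility (of which this proposition is the special case $\mu=\delta_{n-2}$). But as written there is a real gap: all of the substance is deferred to ``a finite enumeration of local configurations'' that you do not carry out, and that enumeration is not as local as you suggest. First, the claim that a slide preserves the ascent/descent status of a consecutive pair is false for general standard tableaux --- the single column with $1$ above $2$ rectifies to the single row $1\,2$, turning a descent into an ascent --- so each of your subcases must genuinely use the alternating hypothesis, and not only on the pair $(i,i+1)$ under consideration. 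Concretely, in the subcase where $i$ (even) moves left along the bottom diagonal while $i+1$ stays, so the descent $(i,i+1)$ threatens to become an ascent, ruling this out requires examining what the slide does to the neighboring prescribed ascent $(i-1,i)$: the verification for one pair propagates to an adjacent pair. In the general-$\mu$ version the paper controls this propagation with a minimal-counterexample argument; in the alternating case the propagation does terminate after one step, because there is no integer strictly between $i$ and $i+1$, but that is precisely the content you still need to supply. Second, ``each slide is invertible, so both directions follow'' only works if each subcase is phrased symmetrically (the pair can neither gain nor lose its prescribed status); a one-directional analysis of forward slides does not automatically cover reverse slides.

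You should also know that the paper proves this particular proposition by a much shorter route that avoids the slide analysis entirely: a tableau of size $2n-1$ is alternating iff its peak set is $\{2,4,\dots,2n-2\}$; the reading word of a skew shifted SYT is preserved under $\jdt$ up to the Sagan--Worley elementary relations; and although those relations do not preserve ascents and descents individually, they do preserve peaks, since $i$ is a peak iff both $i-1$ and $i+1$ precede it in the word, a property no relation can alter. If you want a short proof, that is the argument to pursue; if you complete your route instead, the payoff is that it generalizes to arbitrary $\mu\subset\delta_n$, where the peak characterization is no longer available.
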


\begin{proof}
The \emph{reading word} $\mread(T)$ of a tableau $T$ is the word formed by subsequently reading each row from left to right, starting from the bottom row and moving up. 
Notice that $i$ is an ascent (descent) in $T$ if and only if it is an \emph{ascent} (\emph{descent}) in $\mread(T)$, in the sense that $i$ appears before (after) $i+1$ in the word.\footnote{This is sometimes called a \emph{right ascent} (\emph{right descent}) of the word.}

Now consider a skew shifted SYT $T$ and its shifted jeu de taquin rectification  $U=\jdt(T)$. By \cite[Theorem 7.1] {Sag87} and \cite[Theorem 6.10]{Hai89}, $\mread(T)$ and $\mread(U)$ are equivalent modulo the  Sagan--Worley relations \cite{Sag87}:
\begin{eqnarray*}
ab \cdots 	& \approx & ba \cdots \qquad \quad \quad \textrm{ for }  a<b, \\
 \cdots bac \cdots & \approx &  \cdots bca \cdots \qquad \,  \textrm{ for } a<b<c, \\
 \cdots cab \cdots & \approx &  \cdots acb \cdots \qquad  \, \textrm{ for } a<b<c ,
\end{eqnarray*}
where the letters represented by $\cdots$ remain the same.

We now prove that jeu de taquin preserves peaks, by proving that  $\mread(T)$ and $\mread(U)$ have the same peaks. 
Since the Sagan-Worley moves are reversible, we only need to check that a move cannot turn a peak $i$ into a non-peak. This follows from the following observation: $i$ is a peak in a permutation if and only if it is preceded by both $i-1$ and $i+1$. This property cannot be changed by any of the Sagan-Worley relations: The first relation cannot involve $i$, and the second and third  can never change the relative order of two consecutive numbers.

Finally notice that a tableau of size $2n-1$ is alternating if and only if its set of peaks is $\{2, 4, \ldots, 2n-2\}$. This property is preserved by jeu de taquin rectification.
\end{proof}

The previous proof relies heavily on the description of alternating tableau in terms of  peaks; notice that the Sagan-Worley relations do not respect the ascents and descents. We do not know how to extend this argument to the setting of  $\delta_n/\mu$--compatibility. To settle this general case, we will carry out a careful analysis of the jeu de taquin algorithm from the point of view of $\delta_n/\mu$--compatibility.
We will keep referring back to the following:

\medskip

\begin{tabular}{|p{10cm}|}
\hline
\smallskip

For a $\delta_n/\mu$--compatible tableau $T$:

\medskip

\noindent $\bullet$ If $T_{\delta_n/\mu}$ contains \blue{$\ycenter \young(j,i)$}\,, then $(i,j)$ is an \textbf{ascent}  in $T$:

\medskip
\qquad 
$\begin{array}{c}
%sW
\begin{picture}(40,40)(0,0)
\put(10,10){\makebox{$i$}}
\put(35,25){\makebox{$j$}}
\put(1,32){\line(1,0){30}}
\put(31,2){\line(0,1){30}}
\end{picture} \\
\mbox{$i$ is southWest of $j$,}  
\end{array}$
\qquad \qquad   
$\begin{array}{c}
%nE
\begin{picture}(40,40)(0,0)
\put(0,0){\makebox{$i$}}
\put(20,20){\makebox{$j$}}
\put(7,0){\line(1,0){30}}
\put(7,0){\line(0,1){30}}
\end{picture} \\
\mbox{$j$ is northEast of $i$.}  
\end{array}$

\bigskip

\noindent $\bullet$
If $T_{\delta_n/\mu}$ contains \blue{$\ycenter \young(i{\ip})$}\,, then $(i,i+1)$ is a \textbf{descent}  in $T$:
\medskip

\qquad 
$\begin{array}{c}
%Ne
\begin{picture}(40,40)(0,0)
\put(0,0){\makebox{$i+1$}}
\put(20,20){\makebox{$i$}}
\put(0,10){\line(1,0){35}}
\put(0,10){\line(0,1){30}}
\end{picture} \\
\mbox{$i$ is Northeast of $i+1$,}  
\end{array}$
\quad \qquad   
$\begin{array}{c}
%Sw
\begin{picture}(40,40)(5,0)
\put(10,10){\makebox{$i+1$}}
\put(35,25){\makebox{$i$}}
\put(0,22){\line(1,0){40}}
\put(40,0){\line(0,1){22}}
\end{picture} \\
\mbox{$i+1$ is Southwest of $i$.}  
\end{array}$

\\
\hline
\end{tabular}

\bigskip

\begin{proposition}\label{prop:compatible}
A standard Young tableau $T$ is $\delta_n / \mu$--compatible if and only if $\jdt(T)$ is $\delta_n / \mu$--compatible.
\end{proposition}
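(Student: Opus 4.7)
The plan is to reduce the proposition to showing that a single jeu de taquin slide preserves $\delta_n/\mu$--compatibility. Since $\jdt(T)$ is obtained by composing individual slides, and since each slide is reversible (backward slides are well-defined), such a local statement will yield the ``if and only if'' by induction, running the argument forward from $T$ to $\jdt(T)$ and backward from $\jdt(T)$ to $T$.

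A single slide moves exactly one entry $k$ into an adjacent empty cell, either one step up or one step left; all other entries remain fixed. Hence the relative position of any pair $(a,b)$ with $a, b \neq k$ is untouched, and the only critical pairs whose ascent/descent status might change are those involving $k$. Such critical pairs come in two flavors: horizontal pairs $(k-1,k)$ or $(k,k+1)$ arising from a horizontal adjacency in $T_{\delta_n/\mu}$ (requiring a descent at the smaller index in $T$), and vertical pairs $(i,k)$ or $(k,j)$ arising from a vertical adjacency in $T_{\delta_n/\mu}$ (requiring an ascent in $T$). I would enumerate these cases and, for each, use the compatibility hypothesis to pin down where $k$ and its partner sat in $T$ before the slide, and then track the one-cell shift of $k$ to verify the required condition still holds after the slide.

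The case analysis then splits according to the direction of the slide and the type of critical partner involved. The essential ingredient that makes everything go through here, but not for arbitrary skew shapes (as the $(2)/\emptyset$ example preceding the proposition shows), is the special structure of the standard filling $T_{\delta_n/\mu}$: because rows of $\delta_n/\mu$ inherit the descending pattern of the staircase, the indices that can appear together in horizontal and vertical critical pairs are tightly constrained, which rules out the configurations in which a slide would convert an ascent to a descent (or vice versa) in a bad way. The main obstacle, as the authors anticipate, is precisely this bookkeeping: one has to organize all the cases and, for each, combine the compatibility of $T$ with the structural constraints on $\delta_n/\mu$ to eliminate the exceptional configurations. The argument is elementary but delicate, which is why the clean peak-based proof used for $\delta_n/\delta_{n-2}$ (where the Sagan--Worley moves preserve peaks) does not suffice in the general setting.
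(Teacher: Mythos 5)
Your overall strategy---reduce to a single jeu de taquin step, use reversibility of slides to get both directions, and run a case analysis according to which member of a prescribed pair moved and in which direction---matches the shape of the paper's argument. But there are two genuine gaps. First, a slide does not move ``exactly one entry $k$'': it moves a whole chain of entries (the paper's own illustrative slide consists of four moves), so it is not true that only pairs involving one distinguished entry can change status, and the analysis must repeatedly track two or three entries that move within the same slide. Even if you retreat to analyzing individual moves, a single move in isolation does not yield a contradiction; the offending configurations can only be excluded by looking at what the neighboring entries in the same slide did.

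Second, and more importantly, the local analysis does not close by itself, and this is precisely where your plan is silent. In several of the paper's cases (a prescribed ascent $(i,j)$ gained with $i$ fixed and $j$ moving up; an ascent gained with $i$ moving left; an ascent lost with $i$ moving up; and the descent case in which $i$ moves left), the staircase structure does \emph{not} directly rule out the offending configuration. Instead one shows that the same slide would then also create or destroy another prescribed ascent $(i-1,j-1)$, or $(h,i-1)$, or $(j+1,i)$, with strictly smaller first index. The proof therefore must fix $i$ \emph{minimal} among the first coordinates of all prescribed ascents whose status changed, and derive the contradiction from that extremal choice; the problematic descent cases are likewise closed by reducing them to the ascent cases. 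This minimality argument, together with the descent-to-ascent reduction, is the key idea that makes the bookkeeping terminate, and your step ``combine the compatibility of $T$ with the structural constraints on $\delta_n/\mu$ to eliminate the exceptional configurations'' is exactly the step that fails without it.
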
 

\begin{proof}
During the procedure of  jeu de taquin rectification, we call the move (up or left) of a single number a \emph{move}, and a series of (upward and leftward) moves transforming an inner corner into an outer corner a \emph{slide}. For instance
\[
\ycenter   \young(\,\,1247,:\,359,::68{\ten}) \mapsto  \young(\,\,1247,:3589,::6{\ten}) 
\]
is a slide consisting of four moves. We will prove that a slide cannot affect the $\delta_n/\mu$--compatibility of a skew shifted SYT, which will show the desired result.

\medskip

For the sake of contradiction, assume that a slide of jeu de taquin, which transformed a tableau $T_1$ into a tableau $T_2$, affected  $\delta_n / \mu$-compatibility. There are two (not mutually exclusive) cases, namely:
\begin{itemize}
 \item the tableau gained/lost  an ascent $(i,j)$ prescribed by   \blue{$\ycenter \young(j,i)$} in $T_{\delta_n / \mu}$, or
 \item  the tableau gained/lost a descent $(i, i+1)$ prescribed by  \blue{$\ycenter \young(i{\ip})$} in $T_{\delta_n / \mu}$.
\end{itemize}
%:  that the tableau gained/lost  an ascent $(i,j)$ or a descent $(i, i+1)$ prescribed by  $T_{\delta_n / \mu}$.
We will study these two cases separately.

\medskip
% 
% \noindent \textbf{Case 1:} The tableau gained an ascent $(i,j)$ prescribed by  $T_{\delta_n / \mu}$, with $i<j$. 
% 
% This means in $T_1$, $j$ is not northEast of $i$, but in $T_2$ it is. Since $i<j$, the only way for this to happen is for $i$ to be in the row right above $j$, and to the left, and for $j$ to move up one row while $i$ stays still, as in the figure:
% 
% \[
%  \ycenter \young(i~~~,~~~j) \mapsto \young(i~~j,~~~~)
% \]
% 
% 
% 
% \noindent \textbf{Case 2:} The tableau lost an ascent $(i,j)$ prescribed by  $T_{\delta_n / \mu}$, with $i<j$. 
% 
% This means in $T_1$, $j$ is northEast of $i$, but in $T_2$ it is not. Since $i<j$, the only way for this to happen is for $i$ and $j$ to be in the same row, with $i$ to the left of $j$, and for $i$ to move up one row while $j$ stays in the same row, as in the figure:
% 
% \[
%  \ycenter \young(~~~~,i~~j) \mapsto \young(i~~~,~~~j)
% \]
% 
% \noindent \textbf{Case 3:} The tableau gained a descent $(i,i+1)$ prescribed by  $T_{\delta_n / \mu}$. 
% 
% 
% 
% \noindent \textbf{Case 4:} The tableau lost an ascent $(i,i+1)$ prescribed by  $T_{\delta_n / \mu}$.
% 

\noindent \textbf{Case 1:} The tableau gained or lost an ascent $(i,j)$ prescribed by  $T_{\delta_n / \mu}$, with $i<j$. 

\smallskip

Assume that $i$ is minimal among all such ascents. %Note that we can assume that all the slides are up or to the left, since if one gains a descent/ascent, say by moving down, one would lose this same descent/ascent by moving the same cell up. Thus,
We consider four subcases, namely when $i$ remains still and $j$ moves left or up, and when $i$ moves left or up.

\smallskip

\noindent \textbf{Case 1.1:} During the slide, $i$ did not move and $j$ moved left.

\smallskip

The area southWest of $j$ before the move of $j$ contains the area southWest of $j$ after the move, so the tableau must have lost the descent: 
Before the move $i$ was southWest of $j$, and after the move it is not, making the tableau $T_1$ lose its $\delta_n/\mu$-compatibility when it turned into $T_2$. Since $i$ did not move, it must have been on the column directly left of $j$'s column, and below $j$. But then the move put $j$ above $i$ in $T_2$, a contradiction.

\smallskip

\noindent \textbf{Case 1.2:} During the slide, $i$ did not move and $j$ moved up.

\smallskip

Before the move $i$ was not southWest of $j$, and after the move it is. The tableau gained a prescribed ascent, making $T_2$ a $\delta_n/\mu$--compatible tableau.

Since $i$ did not move, it must have been on the row directly above $j$'s row, and strictly left of $j$. In $T_2$, $i$ and $j$ are on the same row. If there was a number $x$ between them, it would satisfy $i<x<j$. 
In $T_{\delta_n/\mu}$, because $i$ is directly below $j$, $x$ would have to be either directly east of $i$ (and therefore Southwest of $i$ in $T_2$) or directly west of $j$ (and therefore Northeast of $j$ in $T_2$) -- a contradiction in either case. It follows that the slide looked like this:
\[
\ycenter
\young(:\,,iy,:j) \quad  \mapsto \quad
\young(:y,i\,,:j)
 \mapsto \quad
\young(:y,ij,:\,)
\]
where the number $y$ must move up since $i$ does not move. We have $i<y<j$ which, by the argument in the previous paragraph, means that 
\[
\ycenter
T_{\delta_n/\mu} \quad \textrm{ contains } \blue{\quad \young(y{\cdots}j,\,\,i)}
\]
In $T_2$, $j$ is Southwest of $j-1$, which is Southwest of $j-2$, , $\cdots$, which is Southwest of $y$. But $y$ and $j$ are adjacent, so $y=j-1$. Now
\[
\ycenter
T_{\delta_n/\mu} \quad \textrm{ contains } \blue{\quad \young({\jm}j,{\im}i)}
\]
which forces $i-1$ to be Northeast of $i$ and southWest of $j-1$ in $T_2$; \emph{i.e.}, directly above $i$. Therefore the slide looked like:
\[
\ycenter
\young({\im}z,i{\jm},:j) \quad  \mapsto \quad
\young({\im}{\jm},i\,,:j) \quad  \mapsto \quad
\young({\im}{\jm},ij,:\,)
\]
where the number $z$ must have moved up, or else it would be between $i-1$ and $i$. 

We conclude that this slide also made the tableau gain the (smaller) ascent $(i-1,j-1)$, while leaving $i-1$ still and moving $j-1$ up. This contradicts the minimality of $i$.

\smallskip

\noindent \textbf{Case 1.3:} During the slide, $i$ moved left.

\smallskip

Here we gained the prescribed ascent $(i,j)$, making $T_2$  $\delta_n/\mu$--compatible.
%Before the move $j$ was not northEast of $i$, and after the move it is. The tableau gained a prescribed ascent, making $T_2$ a $\delta_n/\mu$--compatible tableau.

In $T_2$, $j$ must be on the column to the right of $i$'s column. It cannot be higher than $i$, or else it would have been on the same column and above $i$ in $T_1$. Therefore it must be directly to the right of $i$, having slid into $i$'s old position. Since $j$ was not northEast of $i$, it must have slid up from below $i$, so the slide looked like this:
\[
\ycenter
\young(\,i,:j) \quad  \mapsto \quad
\young(ij,:\,)
\]
We need to consider two subcases.

\noindent \textbf{Case 1.3.1:} There is no cell to the left of $j$ in $T_1$. From the shape of $\delta_n$, $j+1$ is to the right of $j$ in $T_{\delta_n/\mu}$, so it must be Southwest of $j$ in $T_2$. The only possibility is that it is directly below $j$, and was to the right of $j$ in $T_1$. The slide must have looked like:
\[
\ycenter
\young(\,ix,:j{\jp}) \quad  \mapsto \quad
\young(ijx,:{\jp})
\]
where $j<x<j+1$, a contradiction.

\noindent \textbf{Case 1.3.2:} There is a cell to the left of $j$ in $T_1$. The number in it must be between $i$ and $j$, and by the same argument of Case 1.2, it must actually equal $i+1$, and
\[
\ycenter
T_{\delta_n/\mu} \quad \textrm{ contains } \quad \blue{\young(j{\jp},i{\ip})}
\]
By $\delta_n/\mu$-compatibility, $j+1$ must be directly below $j$ and to the right of $i+1$ in $T_2$, making the slide look like:
\[
\ycenter
\young(\,i,{\ip}j) \quad  \mapsto \quad
\young(ij,{\ip}{\jp})
\]
Note that $j+1$ could not be to the right of $j$ in $T_1$, or else the number directly above it would have to be between $j$ and $j+1$. So $j+1$ must have been below $j$ and slid up:
\[
\ycenter
\young(\,i,{\ip}j,:{\jp}) \quad  \mapsto \quad
\young(ij,{\ip}{\jp})\,.
\]
Therefore the slide introduced the ascent $(i+1,j+1)$ stipulated by $T_{\delta_n/\mu}$, leaving $i+1$ still and moving $j+1$. As we saw in Case 1.2, this is impossible.

\smallskip

\noindent \textbf{Case 1.4:} During the slide, $i$ moved up.
\smallskip

Here we lost the ascent $(i,j)$ when we go from the $\delta_n/\mu$--compatible tableau $T_1$ to $T_2$. Then $j$ must be on the same row as $i$ in $T_1$; arguing as above, it must actually be directly to the right of $j$. The number $x$ directly above $j$ must have stayed still, so the slide looks like:
\[
\ycenter
\young(\,x,ij) \quad  \mapsto \quad
\young(ix,??)\,.
\]
where $j$ may or may not have moved left, so we do not specify the bottom row in $T_2$. As in the previous cases, $i<x<j$ implies that $x=j-1$ and that 
\[
\ycenter
T_{\delta_n/\mu} \quad \textrm{ contains } \quad  \blue{\young({\jm}j,{\im}i)}\, ,
\]
and $\delta_n/\mu$-compatibility then gives that the slide was
\[
\ycenter
\young({\im}{\jm},ij) \quad  \mapsto \quad
\young(i{\jm},??)\,.
\]
If $i-1$ slid up, then the prescribed ascent $(i-1,j-1)$ would also be lost by moving $i-1$ up, contradicting the minimality of $i$. Therefore the slide was
\[
\ycenter
\young(\,{\im}{\jm},:ij) \quad  \mapsto \quad
\young({\im}i{\jm},:??)\,.
\]
If there was a cell to the left of $i$ in $T_1$, the number in it would need to be between $i-1$ and $i$; so this move went along the bottom left diagonal of the board. Also, 
\[
\ycenter
T_{\delta_n/\mu} \quad \textrm{ contains } \quad \blue{\young({\jm}j,{\im}i,h)}
\]
for some $h$. By $\delta_n/\mu$-compatibility, $h$ must have been to the left of $i-1$ in $T_1$. Because we are at the bottom of the board, the slide must have looked like this:
\[
\ycenter
\young(h{\im}{\jm},:ij) \quad  \mapsto \quad
\young(h,{\im}i{\jm},:??)\,.
\]
But then the prescribed ascent $(h,i-1)$ was lost by moving $h$ up, contradicting the minimality of $i$.

\bigskip

\noindent \textbf{Case 2:} The tableau gained or lost a descent $(i,i+1)$ prescribed by  $T_{\delta_n / \mu}$.

\smallskip

Again, we consider the same four subcases as above:

\smallskip

\noindent \textbf{Case 2.1:} During the slide, $i$ did not move and $i+1$ moved up.

\smallskip

Before the move $i$ was Northeast of $i+1$, and after the move it is not. Since $i$ did not move, it must have been on the row directly above $i+1$ and east of it. Therefore the move placed $i+1$ on the same row and to the left of $i$, a contradiction.

\smallskip

\noindent \textbf{Case 2.2:} During the slide, $i$ did not move and $i+1$ moved left.

\smallskip

In this case the tableau must have gained the descent: Before the move $i$ was not Northeast of $i+1$, and after the move it is. Since $i$ did not move, it is on the same column and (necessarily directly) above $i+1$ after the move. The slide must have looked like this:
\[
\ycenter
\young(:i,\,x{\ip}) \quad  \mapsto \quad
\young(:i,x\,{\ip}) \quad \mapsto \quad
\young(:i,x{\ip}\,)  
\]
which gives $i<x<i+1$, a contradiction. 
%\textbf{\textsf{Aca $I$ es $i+1$, no se como hacer para que el tableau quede chevere. Igual abajo con $J$ y $K$. RESPUESTA: Sigamos haciendolo con I y J, que por ahora no veo forma bonita de poner los i+1 etc en el tableaux, ya con mas calmita hago unos dibujos despues.}}

%\smallskip

\pagebreak

\noindent \textbf{Case 2.3:} During the slide, $i$ moved up.
\smallskip

Before the move, $i+1$ was not Southwest of $i$, and after the move it is. Since $i+1$ could not have been on the same row and to the left of $i$ before, it must have been directly to the right of $i$, and must have slid into $i$'s old position:
\[
\ycenter
\young(\,,i{\ip}) \quad  \mapsto \quad
\young(i,\,{\ip}) \quad \mapsto \quad
\young(i,{\ip}\,)\,.  
\]
But then the cell northeast of these is part of the tableau:
\[
\ycenter
\young(\,x,i{\ip}) \quad  \mapsto \quad
\young(ix,\,{\ip}) \quad \mapsto \quad
\young(ix,{\ip}\,)\,,
\]
and the number $x$ in it satisfies $i<x<i+1$, a contradiction.

\smallskip

\noindent \textbf{Case 2.4:} During the slide, $i$ moved left.

\smallskip

Before the move, $i+1$ was Southwest of $i$, and after the move it is not. Therefore $i+1$ must have been on the same column as $i$ and (necessarily directly) below it. The slide must have looked like this:
\[
\ycenter
\young(\,i,*{\ip}) \quad  \mapsto \quad
\young(i{\ip},*)
\]
The tableaux cannot contain the cell with the asterisk, because the number in it would need to be between $i$ and $i+1$. Therefore this part of the slide is happening along the lower diagonal of the tableaux.

Because $i+1$ is not a descent in $T_1$, it must be the rightmost number in its row in $T_{\delta_n/\mu}$. Given the shape of $\delta_n$,
\[
\ycenter
T_{\delta_n/\mu} \quad \textrm{ contains } \quad \blue{\young(i{\ip},{\jp})}\, .
\]
for some $j+1<i$. Then $(j+1,i)$ must be an ascent in $T_1$, which implies that the slide looked like this

\[
\ycenter
\young(\,,{\jp}i,:{\ip}) \quad  \mapsto \quad
\young({\jp},i{\ip},:\,)\,.
\]
This means that this slide made the tableau lose the prescribed ascent $(j+1,i)$ which, as we saw in Case 1, leads to a contradiction.
\end{proof}

\section{Proof of Theorem \ref{th:main}}\label{sec:proof}

We will need the following two theorems on the Schur expansions of skew Schur functions and Schur $P$-functions.

\begin{theorem}[Shimozono \cite{Shi99}]\label{th:shimozono} 
We have
 \[
  s_{\lambda / \mu} = \sum_{\nu} c^{\lambda}_{\mu,\nu} s_{\nu},
 \]
where the Littlewood--Richardson coefficient $c^{\lambda}_{\mu,\nu}$ is equal to the number of SYT $T$ of shape $\nu$ which are $\lambda / \mu$--compatible.
\end{theorem}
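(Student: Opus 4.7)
My plan is to derive Shimozono's identity in two stages. The expansion $s_{\lambda/\mu} = \sum_\nu c^\lambda_{\mu,\nu} s_\nu$ with $c^\lambda_{\mu,\nu}$ the ordinary Littlewood--Richardson coefficient is classical; combining Gessel's expansion of skew Schur functions into fundamental quasisymmetric functions with the fact that jeu de taquin preserves descent sets yields it immediately, via
\[
c^{\lambda}_{\mu,\nu} = \#\{T \in \mathsf{SYT}(\lambda/\mu) : \jdt(T) = V\}
\]
for any fixed SYT $V$ of shape $\nu$. Hence the only content to establish is that this Littlewood--Richardson coefficient equals the number of $\lambda/\mu$-compatible SYT on the straight shape $\nu$.

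The key technical step would be to show that $\lambda/\mu$-compatibility, viewed as a condition on an arbitrary SYT, is invariant under jeu de taquin slides; this is the general-shape analogue of Proposition~\ref{prop:compatible} of the present paper. I would attempt it via the Sagan--Worley or Knuth plactic relations on reading words, verifying that the local plactic moves preserve the descent/ascent conditions imposed by the horizontal and vertical adjacencies of $T_{\lambda/\mu}$. The staircase argument in the paper is largely structural and should adapt, though the case analysis for an arbitrary skew shape is considerably more involved because $T_{\lambda/\mu}$ then has a richer adjacency pattern.

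Granting jeu-de-taquin invariance of compatibility, the theorem reduces to showing that each dual equivalence class of $\mathsf{SYT}(\lambda/\mu)$ rectifying to shape $\nu$ contains exactly one skew tableau whose rectification is $\lambda/\mu$-compatible. Existence of such a compatible member per class can be pursued by an inductive construction: peel off inner corners of $\mu$ one at a time, reverse-sliding SYTs of shape $\nu$ outward along each new corner and tracking how the compatibility conditions transform. The main obstacle, and the technical heart of the proof, is uniqueness: no two distinct $\lambda/\mu$-compatible SYT of the same straight shape should lie in the same dual equivalence class. For this step I would ultimately rely on Shimozono's analysis in \cite{Shi99}, specifically his catabolism operation and its interaction with standardization, which together force the required uniqueness.
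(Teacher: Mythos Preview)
The paper does not prove this theorem at all; its entire ``proof'' is a citation of \cite{Shi99} together with the remark that Shimozono's convention for $\lambda/\mu$--compatibility differs from the paper's by reversing the alphabet. Your proposal, by contrast, outlines an independent argument, so the two are not really comparable as proofs.

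Unfortunately, the central step of your outline is false. You propose to show that $\lambda/\mu$--compatibility is invariant under jeu de taquin for \emph{arbitrary} skew shapes, calling this ``the general-shape analogue of Proposition~\ref{prop:compatible}.'' But the paper explicitly warns, in the paragraph immediately preceding that proposition, that no such general analogue exists: shifted rectification sends $\young(1,2)$ to $\young(12)$, destroying $(2)/\emptyset$--compatibility and creating $(1,1)/\emptyset$--compatibility. The staircase hypothesis is used essentially in every subcase of the proof of Proposition~\ref{prop:compatible}. The obstruction is that compatibility involves \emph{long} ascents $(i,j)$ with $j>i+1$, coming from the vertical adjacencies of $T_{\lambda/\mu}$, and these are not preserved by Knuth or Sagan--Worley moves. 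Concretely, the Knuth relation $bac\sim bca$ reverses the relative position of $a$ and $c$; in tableau form, unshifted rectification sends $\young(:2,13)$ to $\young(12,3)$, turning the ascent $(1,3)$ into a descent, and $(1,3)$ is a prescribed ascent for, e.g., $\lambda/\mu=(2,2)$. So neither the shifted nor the unshifted version of your key lemma holds.

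The remainder of the sketch is also problematic. If compatibility were jdt-invariant, then every tableau in a given dual-equivalence class of $\mathsf{SYT}(\lambda/\mu)$ would share the same rectification and hence the same compatibility status, so the phrase ``each dual equivalence class \ldots\ contains exactly one skew tableau whose rectification is $\lambda/\mu$-compatible'' does not parse as stated. And for the final uniqueness step you yourself fall back on \cite{Shi99}, which is exactly what the paper does.
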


\begin{proof}
 The proof is  in \cite{Shi99}, but note that Shimozono's definition of $\lambda / \mu$-compatibility differs by ours in the sense that it reverses ascents and descents. The change is easily made by considering the reverse alphabet $\cdots > 3 > 2 > 1$.
\end{proof}

%Stembridge has interpreted the coefficients in the Schur expansion of a Schur $P$-function as follows.

\begin{theorem}[Stembridge \cite{Ste89}]\label{th:stembridge} 
Fix a shifted SYT $U$ of shape $\lambda$. We have
 \[
  P_{\lambda} = \sum_{\mu} g^{\lambda}_{\mu} s_{\mu},
 \]
where $g^{\lambda}_{\mu}$ is the number of SYT $T$ of shape $\mu$ such that $\jdt(T) = U$.
\end{theorem}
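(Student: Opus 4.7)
The plan is to prove Stembridge's identity via shifted insertion and shifted jeu de taquin, following the approach of \cite{Ste89}. The argument has two main ingredients: the well-definedness of $g^\lambda_\mu$ and a weight-preserving bijection between shifted and ordinary tableaux.

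First, I would establish that the quantity $g^\lambda_\mu$ is well-defined, i.e., independent of the chosen shifted SYT $U$ of shape $\lambda$. For any two shifted SYT $U_1, U_2$ of shape $\lambda$, one must exhibit a shape-preserving bijection between $\{T \text{ SYT} : \jdt(T) = U_1\}$ and $\{T \text{ SYT} : \jdt(T) = U_2\}$. The natural tool is shifted dual equivalence, as in Haiman \cite{Hai89}: any two shifted SYT of shape $\lambda$ are connected by a sequence of elementary dual equivalence moves, and each such move lifts to a shape-preserving involution on the preimage sets under shifted rectification.

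Second, I would rewrite $P_\lambda$ using a shifted insertion algorithm. Starting from the generating function $P_\lambda = \sum_{\sh(S) = \lambda} x^S$ over shifted SSYT $S$, apply shifted insertion (Sagan--Worley \cite{Sag87} or Haiman's shifted mixed insertion) to pair each $S$ with a shifted SYT $U'$ of shape $\lambda$ and an ordinary SYT $T$ of some shape $\mu$ satisfying $\jdt(T) = U'$, while simultaneously recording the content as an ordinary SSYT $T'$ of shape $\mu$ whose standardization is $T$. This bijection preserves $x^S = x^{T'}$ and is a well-known consequence of the shifted RSK framework.

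Third, summing over all shifted SSYT $S$ of shape $\lambda$, grouping by $\mu = \sh(T')$, and using the invariance from step~1 to replace the dependence on $U'$ by the constant $g^\lambda_\mu$, one obtains
\[
P_\lambda = \sum_\mu g^\lambda_\mu \sum_{\sh(T') = \mu} x^{T'} = \sum_\mu g^\lambda_\mu \, s_\mu.
\]
The main obstacle is step~1: the shape-preserving independence of $g^\lambda_\mu$ from the choice of $U$, which genuinely requires the shifted dual equivalence theory of \cite{Hai89} (or an equivalent analysis of how shifted jeu de taquin interacts with standardization). Once this invariance is granted, the bijective argument in steps~2 and 3 reduces to a routine manipulation of shifted insertion.
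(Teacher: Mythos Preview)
The paper does not give its own proof of this theorem: it is quoted from Stembridge~\cite{Ste89} and used as a black box in the proof of Theorem~\ref{th:main}. So there is no argument in the paper to compare your proposal against.

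That said, your outline is broadly in the spirit of Stembridge's original approach, and step~1 (independence of $g^\lambda_\mu$ from the choice of $U$ via shifted dual equivalence, as in Haiman~\cite{Hai89}) is exactly the right tool. Step~2, however, is imprecise as written. You describe ``shifted insertion'' as taking a shifted SSYT $S$ of shape $\lambda$ to a pair consisting of an \emph{ordinary} SYT $T$ and an \emph{ordinary} SSYT $T'$ with $\jdt(T)=U'$; but shifted insertion (Sagan--Worley or Haiman mixed) naturally produces \emph{shifted} tableaux from words, not ordinary ones from shifted tableaux, so the map you invoke is not the one you name. The actual mechanism in~\cite{Ste89} goes through standardization and quasisymmetric functions: one shows that the generating function for shifted SSYT of shape $\lambda$ standardizing to a fixed shifted SYT $U'$ depends only on the peak set of $U'$, and matches the sum of fundamental quasisymmetric functions $F_{\mathrm{Des}(T)}$ over unshifted SYT $T$ with $\jdt(T)=U'$. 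Summing over $U'$ and regrouping by $\sh(T)$ then gives the result. Your step~2 gestures at this but conflates the roles of insertion and rectification; if you intend to make this a self-contained proof you should state precisely which bijection carries a shifted SSYT $S$ to an ordinary SSYT $T'$ and why it is weight-preserving and surjective onto the claimed set.
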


We have now assembled all the ingredients to prove the main theorem.

\begin{proof}[Proof of Theorem \ref{th:main}]
Denote the set of shifted standard Young tableaux of shape $\delta_n / \mu$ by \textsf{ShSYT}%$(\delta_n / \mu)$
, and the set of (shifted) standard $\delta_n / \mu$--compatible tableaux by \textsf{CompSYT}%$(\delta_n / \mu)$ 
(\textsf{CompShSYT}%$(\delta_n / \mu)$
). By Theorem \ref{th:shimozono} we have
%Denote the set of shifted standard Young tableaux of size $2n-1$ by  \textsf{ShSYT}$(2n-1)$, and the set of (shifted) standard alternating tableaux of size $2n-1$ by \textsf{Alt}$(2n-1)$ (\textsf{ShAlt}$(2n-1)$). By Theorem \ref{th:shimozono}, Lemma \ref{le:compatible}, and Theorem \ref{th:stembridge}, we have
\begin{eqnarray*}
 s_{\delta_n / \mu}	& = & \sum_{T \in \textsf{CompSYT}}%(\delta_n / \mu)} 
 s_{\sh(T)} \\
& = & \sum_{U \in \textsf{ShSYT}}\,\,  \sum_{T \in \textsf{CompSYT}%(\delta_n / \mu)
\, : \,  U = \jdt(T)} s_{\sh(T)}.
\end{eqnarray*}
By Proposition \ref{prop:compatible} and Theorem \ref{th:stembridge} respectively, this equals
\begin{eqnarray*}
 s_{\delta_n / \mu}
&=& \sum_{U \in \textsf{CompShSYT}}%(\delta_n / \mu)} 
\,\, \sum_{T \in \textsf{SYT} \, : \, U = \jdt(T)}  s_{\sh(T)}  \\
				& = & \sum_{U \in \textsf{CompShSYT}}%(\delta_n / \mu)} 
				P_{\sh(U)}
\end{eqnarray*}
as we wished to prove. \end{proof}

\section{Further Work}

\begin{itemize} 
\item
As mentioned earlier, the Schur and the Schur $P$-functions are related to the representations and the projective representations of the symmetric group, and to the cohomology of the Grassmannian and the isotropic Grassmannian. 
The representation theoretic and geometric significance of Theorem \ref{th:main} should be explored.
\item
Theorem \ref{th:main} implies that if $\lambda / \mu$ is a disjoint union of staircase skew shapes and their $180$ degree rotations, then $s_{\lambda / \mu}$ is Schur $P$-positive. It is natural to wonder whether these are the only skew Schur functions which are Schur $P$-positive. In fact, Dewitt \cite{Dew12} has proved the stronger statement that these are the only skew Schur functions which are \emph{linear} combinations of Schur P-functions.
\end{itemize}

\end{document}